\documentclass[preprint,11pt]{elsarticle}
\usepackage{lineno,hyperref}
\modulolinenumbers[5]

\usepackage{color}
\usepackage{amsmath}
\usepackage{amsfonts,amsthm,amssymb}
\usepackage{amsthm}
\usepackage{amsfonts}
\usepackage{graphics}
\usepackage{pstricks}
\usepackage{graphicx}
\usepackage{cleveref}
\usepackage{extarrows,chngpage,array,float}
\usepackage{amssymb}
\usepackage{latexsym,bm}
\usepackage{tikz-cd}
\usepackage{natbib}
\usepackage{algorithm}
\usepackage{algorithmic}
\newtheorem{theorem}{Theorem}[section]
\newtheorem{lemma}[theorem]{Lemma}
\newtheorem{remark}[theorem]{Remark}
\newtheorem{definition}[theorem]{Definition}
\newtheorem{proposition}[theorem]{Proposition}
\newtheorem{corollary}[theorem]{Corollary}

\newtheorem{problem}[theorem]{Problem}

\newtheorem{example}[theorem]{Example}
\allowdisplaybreaks 

\journal{xxx}

\begin{document}
	
	\begin{frontmatter}
		
		\title{On the maximum twist width of delta-matroids}

		\author{Xian'an Jin$^{1,2}$,\ \ Zhuo Li$^{1}$\footnote{Corresponding author.},\ \ Qi Yan$^{3}$,\ \ Gang Zhang$^{1,4}$\\
        ~\\
        \small $^{1}$School of Mathematical Sciences, Xiamen University, P. R. China\\
        \small $^{2}$School of Mathematics and Statistics, Qinghai Minzu University, P. R. China\\
		\small $^{3}$School of Mathematics and Statistics, Lanzhou University, P. R. China\\
        \small $^{4}$School of Mathematics and Statistics, Minnan Normal University, P. R. China\\
               				\small\tt Email: xajin@xmu.edu.cn, lzhuo@stu.xmu.edu.cn, yanq@lzu.edu.cn, gzh\_ang@163.com}
		
		\begin{abstract}
        For a ribbon graph  $G$, let  $\gamma(G)$ denote its Euler genus.  Recently, Chen, Gross and Tucker [J. Algebraic Combin. 63 (2026) 13] derived a formula for the maximum partial-dual Euler-genus $\partial\gamma_M(G)$ of a ribbon graph $G$. Their key finding is that $\partial\gamma_M(G)$ can be achieved by a partial dual with respect to the edge set of a spanning quasi-tree. Moreover, they proposed the following problem: ``~Given a ribbon graph $G$, is there a sequence of edges $e_1,e_2,\dots, e_k$ such that $\gamma(G^{\{e_1, e_2,\dots, e_k\}})=\partial\gamma_M(G)$ and such that the sequence  
			$$\gamma(G), \gamma(G^{\{e_1\}}), \dots, \gamma(G^ {\{e_1, e_2,\dots, e_k\}})$$
			rises monotonically (i.e., never decreasing) to $\partial\gamma_M(G)$?~'' 
            
            Delta-matroids are set systems that satisfy the symmetric exchange axiom and serve as a matroidal abstraction of ribbon graphs. In this paper, we first show that the maximum twist width of a set system can be attained by twisting one of its feasible sets, which extends the result of Chen, Gross and Tucker to set systems. Then we solve the delta-matroid version of their problem, thereby providing an affirmative answer to the original problem for ribbon graphs.
		\end{abstract}
		
		\begin{keyword}
			Euler genus, twist width, delta-matroids, partial dual, ribbon graph
		\end{keyword}
		
	\end{frontmatter}
	\section{Introduction}
	The partial dual $G^A$ of a ribbon graph $G$ with respect to an edge subset $A$ was introduced by Chmutov \cite{Chmutov2019}, serving as a generalization of the geometric duality of $G$. There is extensive literature on the partial duals of ribbon graphs. Gross et al. \cite{Gross2020} proposed a method for expressing the Euler genus of a ribbon graph. Chen et al. \cite{Chen2025+} derive a formula for the maximum partial-dual Euler-genus of any ribbon graph. For other related literature, see the references \cite{Chen2025, Huggett2013, Moffatt2012, Metsidik2018}.
	
	Let $k(G)$ denote \emph{the number of connected components} of $G$, and let $v(G), e(G)$, $f(G)$ and $\gamma(G)$ denote the numbers of \emph{vertices, edges, faces} and \emph{Euler genus} of $G$, respectively. By Euler's formula, we have
	$$\gamma(G)=2k(G)-v(G)+e(G)-f(G).$$
	
	A \emph{set system} is a pair $D = (E, \mathcal{F})$, where $E$ (also denoted $E(D)$) is a finite set called the \emph{ground set}, and $\mathcal{F}$ (also denoted $\mathcal{F}(D)$) is a collection of subsets of $E$ called \emph{feasible sets}.  Bouchet \cite{Bouchet1987} introduced \emph{delta-matroids} as a set system $D = (E, \mathcal{F})$ with $\mathcal{F}\neq \emptyset$ satisfying the symmetric exchange axiom. Delta-matroids are matroidal abstractions of ribbon graphs. The relationship between embedded graphs and delta-matroids parallels the relationship between graphs and matroids; much research on delta-matroids originates from ribbon graphs. For additional related work, we refer the reader to \cite{Bouchet1989, Chun2019F, Moffatt2017, Yan2022,  Yan2024, Yuschak2024}.

    For any ribbon graph $G$, there exists a delta-matroid $D(G)$ \cite{Chun2019} naturally associated with it. The twist $D(G) \Delta A$ (an operation defined by Bouchet \cite{Bouchet1987} for delta-matroids) corresponds precisely to taking the partial dual $G^A$ with respect to an edge subset $A$; that is,
\[
D(G) \Delta A = D(G^A).
\]

     Let $D=(E, \mathcal{F})$ be a set system. We denote by $\mathcal{F}_{\max}(D)$ and $\mathcal{F}_{\min}(D)$ the collections of feasible sets of maximum and minimum cardinality, respectively. Let $r_{\max}(D)$ and $r_{\min}(D)$ denote the cardinalities of a maximum and a minimum feasible set, respectively.  The  \emph{width} of $D$, denote by $\omega(D)$, is defined by $$\omega(D):=r_{\max}(D)-r_{\min}(D).$$
      Let $G$ be a ribbon graph and $D(G)$ its delta-matroid. Chun et al. showed in \cite{Chun2019} that the width of $D(G)$ equals to the Euler genus of $G$; that is, $$\gamma(G)=\omega\left(D(G)\right).$$

     Chen et al. \cite{Chen2025+} defined the \emph{maximum partial-dual Euler-genus $\partial\gamma_M(G)$} of a ribbon graph $G$ as
\[
\partial\gamma_M(G) = \max\{\, \gamma(G^A) \mid A \subseteq E(G) \,\}.
\]
Similarly, we define the \emph{maximum twist width $\partial\omega_M(D)$} of a set system $D = (E, \mathcal{F})$ as
\[
\partial\omega_M(D) = \max\{\, \omega(D \Delta A) \mid A \subseteq E \,\}.
\]
In \cite{Chen2025+}, Chen et al. also provided a formula for $\partial\gamma_M(G)$. A key insight of their work is that this maximum is attained by a partial dual with respect to the edge set of a spanning quasi-tree. We extend this insight to set systems by showing that the maximum twist width $\partial\omega_M(D)$ can be attained by twisting some feasible set.
	
	In addition, Chen, Gross, and Tucker proposed the following problem:
	\begin{problem}[\cite{Chen2025+}]\label{oques}
		\normalfont
		For any ribbon graph $G$, is there a sequence of edges $e_1,e_2,\dots, e_k$ such that
		$\gamma(G^ {\{e_1, e_2,\dots, e_k\}})=\partial\gamma_M(G)$ and such that the sequence 
		$$\gamma(G), \gamma(G^{\{e_1\}}), \dots, \gamma(G^ {\{e_1, e_2,\dots, e_k\}})$$
		rises monotonically (i.e., never decreasing) to $\partial\gamma_M(G)$?
	\end{problem}
    
	We extend this problem to the framework of delta-matroids.
    
	\begin{problem}\label{ques}
		\normalfont
		For any delta-matroid $D=(E,\mathcal{F})$, is there a sequence of elements $ e_1, e_2,\dots, e_k$ such that
		$$\omega(D\Delta \{e_1, e_2,\dots, e_k\})=\partial\omega_M(D)$$ and such that the sequence 
		$$\omega(D), \omega(D\Delta \{e_1\}), \dots, \omega(D\Delta \{e_1, e_2,\dots, e_k\})$$
		rises monotonically (i.e., never decreasing) to $\partial\omega_M(D)$?
	\end{problem}

    We provide an affirmative answer to Problem~\ref{ques} (see Theorem~\ref{key}), thereby also resolving Problem~\ref{oques}. In fact, our result further demonstrates that the set $\{e_1, e_2, \dots, e_k\}$ itself can be chosen to be a feasible set of $D$.

    The remainder of this paper is structured as follows. Section~\ref{2} recalls the basic definitions of ribbon graphs, delta-matroids, partial duals, and twists. In Section~\ref{3}, we present our first main result, which extends the result of Chen et al. \cite{Chen2025+} from ribbon graphs to set systems. Section~\ref{4} provides a complete solution to Problem~\ref{ques}. Finally, Section~\ref{5} presents an algorithm for constructing a non-decreasing sequence that attains $\partial\omega_M(D)$ and provides some examples.
	
	\section{Preliminaries}\label{2}
    
	In this section, we recall several definitions related to ribbon graphs and delta-matroids. We refer the reader to \cite{Bouchet1987}, \cite{Chmutov2019}  and \cite{Chun2019} for further details.
    
	\subsection{Ribbon Graphs}
	A ribbon graph arises naturally from a cellularly embedded graph by taking a small neighborhood for each vertex and edge, and is formally defined as follows:
	
	\begin{definition}[\cite{Bollobas2002}]
		\normalfont
		A \emph{ribbon graph} \( G = (V(G), E(G)) \) is a surface with boundary, represented as the union of two sets of topological discs: a set \( V(G) \) of vertices and a set \( E(G) \) of edges, satisfying the following properties:
		\begin{enumerate}
			\item  The vertices and edges intersect in disjoint line segments.
			\item   Each such line segment lies on the boundary of exactly one vertex and exactly one edge.
			\item   Every edge contains exactly two such line segments.
		\end{enumerate}
	\end{definition}

Let \( G \) be a ribbon graph. A \emph{ribbon subgraph} of \( G \) is obtained by deleting some vertices and edges from \( G \). A \emph{spanning ribbon subgraph} is obtained by deleting edges only. A \emph{quasi-tree} \( Q \) is a connected ribbon graph with exactly one boundary component. For a connected ribbon graph \( G \), a \emph{spanning quasi-tree} \( Q \) of \( G \) is a spanning ribbon subgraph with exactly one boundary component. For a disconnected ribbon graph \( G \), we say that a ribbon graph \( Q \) is a \emph{spanning quasi-tree} of \( G \) if \( k(Q) = k(G) \) and if each connected component of \( Q \) is a spanning quasi-tree of the corresponding connected component of \( G \). 

\begin{definition} [\cite{Chmutov2019}]
\normalfont
For a ribbon graph $G$ and $A\subseteq E(G)$, the \emph{partial dual} $G^A$ of $G$ with respect to $A$ is a ribbon graph obtained from $G$ by gluing a disc to $G$ along each boundary component of the spanning ribbon subgraph $(V(G), A)$ (such discs will be the vertex-discs of $G^A$), removing the interiors of all the vertex-discs of $G$ and keeping its edge-ribbons unchanged. 
\end{definition}

In particular, the \emph{geometric dual} of $G$, denoted $G^*$, is the partial dual with respect to $E(G)$, i.e., $G^* = G^{E(G)}$.
	
\subsection{Delta-matroids}

A set system $D$ is \emph{proper} if $\mathcal{F}\neq \emptyset$,  is \emph{trivial} if $E=\emptyset$. Bouchet \cite{Bouchet1987} introduced delta-matroids as follows.
	
	\begin{definition}[\cite{Bouchet1987}]
		\normalfont
		A \emph{delta-matroid} is a proper set system \( D = (E, \mathcal{F}) \) which satisfies the \emph{Symmetric Exchange Axiom}:
		for any \( X, Y \in \mathcal{F} \) and any \( u \in X \Delta Y \), there exists a \( v \in X \Delta Y \) (possibly \( v = u \)) such that \( X \Delta \{u, v\} \in \mathcal{F} \). Here \( X \Delta Y = (X \cup Y) \setminus (X \cap Y) \) denotes the symmetric difference of the sets $X$ and $Y$.
	\end{definition}

    Let $D=(E, \mathcal{F})$ be a delta-matroid. If all feasible sets in $\mathcal{F}$ have the same size, then $D$ is a \emph{matroid}, and the feasible sets are called its \emph{bases}.
	The twist is one of the most fundamental operations in delta-matroid, introduced by Bouchet in \cite{Bouchet1987}.
	\begin{definition}[\cite{Bouchet1987}]
		\normalfont
        Let \( D = (E, \mathcal{F}) \) be a set system and let \( A \subseteq E \). The \emph{twist} of \( D \) with respect to \( A \), denoted by \( D \Delta A \), is the set system \( (E, \mathcal{F} \Delta A) \), where
    \[
    \mathcal{F} \Delta A := \{ X \Delta A : X \in \mathcal{F} \}.
    \]
	\end{definition}

    Let $G = (V(G), E(G))$ be a ribbon graph, and define
\[
\mathcal{F}(G) := \{ F \subseteq E(G) : F \text{ is the edge set of a spanning quasi-tree of } G \}.
\]
The \emph{delta-matroid} of $G$, denoted by $D(G)$, is defined as $D(G) := (E(G), \mathcal{F}(G))$ \cite{Chun2019}.

	Chun et al. \cite{Chun2019} established the following relationship between the partial duals of a ribbon graph $G$ and the twists of its associated delta-matroid $D(G)$.
	\begin{theorem}[\cite{Chun2019}]\label{trans}
    
		Let $G$ be a ribbon graph and $A\subseteq E(G)$. Then \[
    D(G^A) = D(G) \Delta A.
    \]
	\end{theorem}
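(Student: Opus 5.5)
The plan is to characterise quasi-trees by counting boundary components, and then to show that the spanning ribbon subgraph of $G^A$ with edge set $B$ has exactly as many boundary components as the spanning ribbon subgraph of $G$ with edge set $A\Delta B$. Write $G|_B$ for the spanning ribbon subgraph $(V(G),B)$ and $f(\cdot)$ for its number of boundary components. A spanning subgraph $G|_B$ has at least $k(G|_B)\ge k(G)$ boundary components. Hence $B\in\mathcal F(G)$ if and only if $f(G|_B)=k(G)$: equality forces $k(G|_B)=k(G)$ and exactly one boundary component per component. Since partial duality is performed componentwise, $k(G^A)=k(G)$. The theorem then reduces to the identity
\[
f\bigl(G^A|_B\bigr)=f\bigl(G|_{A\Delta B}\bigr)\qquad\text{for all }A,B\subseteq E(G),
\]
because it gives $B\in\mathcal F(G^A)\iff A\Delta B\in\mathcal F(G)\iff B\in\mathcal F(G)\Delta A$.

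To prove the identity, I would first record how partial duality interacts with deletion and contraction, using the standard facts from \cite{Chmutov2019}:
\[
(G^A)\setminus e=(G\setminus e)^A \quad (e\notin A), \qquad
(G^A)\setminus e=(G/e)^{A\setminus e} \quad (e\in A),
\]
where $G/e=G^{\{e\}}\setminus e$. Deleting every edge outside $B$ from $G^A$ then gives
\[
G^A|_B=\bigl(H\bigr)^{A\cap B}, \qquad H:=\bigl(G/(A\setminus B)\bigr)\big|_{B}.
\]
Next I use two facts. First, the geometric dual of $H^{C}$ is $H^{E(H)\setminus C}$. Second, the boundary components of a ribbon graph correspond to the vertices of its dual. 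Together they give
\[
f(H^{A\cap B})=v(H^{B\setminus A})=f\bigl(H|_{B\setminus A}\bigr),
\]
since the vertices of a partial dual $H^{C}$ are the boundary components of $H|_C$.

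It remains to see that
\[
f\Bigl(\bigl(G/(A\setminus B)\bigr)\big|_{B\setminus A}\Bigr)=f\bigl(G|_{(A\setminus B)\cup(B\setminus A)}\bigr)=f\bigl(G|_{A\Delta B}\bigr).
\]
This holds because contracting an edge $e$ that belongs to the spanning subgraph does not change that subgraph's boundary. Indeed, $G/e$ replaces the vertex discs at $e$ by discs glued along the boundary curves of $(V,\{e\})$, and the boundary of $G|_{S\cup e}$ traces exactly those curves. The contraction is therefore just a regluing inside the surface.

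The main obstacle is making these topological bookkeeping steps rigorous: the commutation of partial duality with deletion and contraction, and the invariance of the boundary under contraction. These are best handled with arrow presentations or signed rotation systems, checking one edge at a time. If that becomes unwieldy, a cleaner fallback is to prove the single-edge case $A=\{e\}$ directly, comparing local boundary traces at $e$ (the $e\in B$ and $e\notin B$ cases swap roles). The general case then follows from $(G^A)^{A'}=G^{A\Delta A'}$ and $(D\Delta A)\Delta A'=D\Delta(A\Delta A')$ by induction on $|A|$.
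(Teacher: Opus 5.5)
The paper gives no proof of this statement; it is quoted from \cite{Chun2019}, so there is no internal argument to compare yours with. Your argument is correct. The characterisation $B\in\mathcal{F}(G)\iff f(G|_B)=k(G)$ is right, and every link in your chain $f(G^A|_B)=f(H^{A\cap B})=f(H|_{B\setminus A})=f\bigl((G|_{A\Delta B})/(A\setminus B)\bigr)=f(G|_{A\Delta B})$ holds.

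However, the detour through deletion, contraction and geometric duality can be skipped entirely, and that detour is exactly where you place the ``main obstacle''. By the definition of partial duality, $v(K^B)=f(K|_B)$ for any ribbon graph $K$ and $B\subseteq E(K)$. Apply this with $K=G^A$ and use Chmutov's rule $(G^A)^B=G^{A\Delta B}$. That rule already underlies your $(G^A)\setminus e=(G/e)^{A\setminus e}$ and $(H^C)^*=H^{E(H)\setminus C}$, and you invoke it explicitly in your fallback. It gives, in one line,
\[
f\bigl(G^A|_B\bigr)=v\bigl((G^A)^B\bigr)=v\bigl(G^{A\Delta B}\bigr)=f\bigl(G|_{A\Delta B}\bigr).
\]
So the theorem needs only your first paragraph, the composition rule, and $k(G^A)=k(G)$.

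Your route does produce an explicit description of $G^A|_B$ as a partial dual of a minor of $G$. That is not needed here, and it costs three extra facts: partial duality commuting with deletion, deletion commuting with contraction, and $f(K/e)=f(K)$.

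If you keep your route, repair the justification of the last fact. The boundary of $G|_{S\cup e}$ does not ``trace exactly'' the boundary curves of $(V,\{e\})$. What is true is that contraction replaces the subsurface formed by $e$ and its incident vertex discs with discs capping that subsurface's boundary curves, which leaves the boundary of the whole surface unchanged. Alternatively, argue $f(K/e)=v\bigl((K/e)^*\bigr)=v(K^*\setminus e)=v(K^*)=f(K)$.
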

    
	\section{Maximum  Twist Width}\label{3}

	\begin{theorem}\label{feasible}
		
For any set system \( D = (E, \mathcal{F}) \), there exists a feasible set \( F \in \mathcal{F} \) such that
    \[
    \omega(D \Delta F) = \partial\omega_M(D).
    \]
	\end{theorem}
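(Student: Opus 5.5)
Let $A\subseteq E$ be a set attaining the maximum, so $\omega(D\Delta A)=\partial\omega_M(D)$. The plan is to move $A$ onto a feasible set of $D$ without decreasing the width. The feasible sets of $D\Delta A$ are the sets $X\Delta A$ with $X\in\mathcal{F}$. Choose $X_1,X_2\in\mathcal{F}$ with $|X_1\Delta A|=r_{\max}(D\Delta A)$ and $|X_2\Delta A|=r_{\min}(D\Delta A)$. Then
\[
\partial\omega_M(D)=|X_1\Delta A|-|X_2\Delta A|.
\]
The natural candidates are $F=X_1$ or $F=X_2$; we take $F=X_2$. For any $B$, the width of $D\Delta B$ is at least $|X_1\Delta B|-|X_2\Delta B|$, because $X_1\Delta B$ and $X_2\Delta B$ are both feasible in $D\Delta B$. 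So it suffices to find a feasible $B$ with
\[
|X_1\Delta B|-|X_2\Delta B|\ \ge\ |X_1\Delta A|-|X_2\Delta A|.
\]

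To compare, partition $E$ by membership in $X_1$ and $X_2$. Elements of $X_1\cap X_2$ and of $E\setminus(X_1\cup X_2)$ contribute equally to $|X_1\Delta B|$ and $|X_2\Delta B|$, so they cancel in the difference. For $e\in X_1\setminus X_2$, the contribution to $|X_1\Delta B|-|X_2\Delta B|$ is $+1$ if $e\in B$ and $-1$ otherwise. For $e\in X_2\setminus X_1$, it is $+1$ if $e\notin B$ and $-1$ otherwise. Hence
\[
|X_1\Delta B|-|X_2\Delta B|\le |X_1\Delta X_2|,
\]
with equality exactly when $B\cap(X_1\Delta X_2)=X_1\setminus X_2$.

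The choice $B=X_1$ satisfies this, but so does $B=X_2$ up to sign. Indeed, with $B=X_2$ we get $|X_1\Delta X_2|-0=|X_1\Delta X_2|$, the maximum possible. Since $A$ gives at most $|X_1\Delta X_2|$ by the same bound, we obtain
\[
\omega(D\Delta X_2)\ \ge\ |X_1\Delta X_2|\ \ge\ \omega(D\Delta A)=\partial\omega_M(D).
\]
Maximality of $\partial\omega_M(D)$ then forces equality, and $F=X_2\in\mathcal{F}$ works.

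The only step requiring care is the bookkeeping inequality $|X_1\Delta A|-|X_2\Delta A|\le|X_1\Delta X_2|$. This is simply the triangle inequality for the symmetric-difference metric: $|X_1\Delta A|\le|X_1\Delta X_2|+|X_2\Delta A|$. This is cleaner than the element-wise case analysis, and it is what I would write in the final proof. The degenerate case $\mathcal{F}=\emptyset$ should be excluded or declared vacuous, and the same argument runs with $X_1=X_2$ when $|\mathcal{F}|=1$.
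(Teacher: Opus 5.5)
Your proof is correct. It selects the same feasible set as the paper: your $X_2$, which realizes $r_{\min}(D\Delta A)$, is exactly the paper's $F$ minimizing $|A\Delta F|$. The difference is in how you show $\omega(D\Delta F)\ge\omega(D\Delta A)$.

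The paper walks from $A$ to $F$ one element of $A\Delta F$ at a time. At each step $r_{\min}$ drops by exactly one, because $F$ remains a nearest feasible set to the current set. Meanwhile $r_{\max}$ drops by at most one, so the width never decreases. You instead apply the triangle inequality for the symmetric-difference metric once, $|X_1\Delta A|-|X_2\Delta A|\le|X_1\Delta X_2|$, and combine it with $r_{\min}(D\Delta X_2)=0$.

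Your route is shorter and avoids the unstated invariant in the paper's iteration. It also yields the paper's formula $\partial\omega_M(D)=\max\{|F_1\Delta F_2| : F_1,F_2\in\mathcal{F}\}$ directly: the maximum twist width is the diameter of $\mathcal{F}$. The paper's route gives something extra in return. Twisting by the elements of $A\Delta F$, in any order, never decreases the width. That path-type statement is closer in spirit to the monotonicity question treated later in the paper.

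One correction: the signs in your element-wise bookkeeping are reversed. For $e\in X_1\setminus X_2$ the contribution to $|X_1\Delta B|-|X_2\Delta B|$ is $+1$ when $e\notin B$. For $e\in X_2\setminus X_1$ it is $+1$ when $e\in B$. So equality in $|X_1\Delta B|-|X_2\Delta B|\le|X_1\Delta X_2|$ holds exactly when $B\cap(X_1\Delta X_2)=X_2\setminus X_1$. The choice $B=X_1$ actually gives the minimum value $-|X_1\Delta X_2|$. Your final argument uses only the triangle inequality and the direct computation for $B=X_2$, so the proof is unaffected, but that paragraph should be removed or fixed.
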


\begin{proof}
It suffices to show that for any subset $A \subseteq E$, there exists a feasible set $F \in \mathcal{F}$ such that $\omega(D \Delta F) \geq \omega(D \Delta A)$.

 If \( A \in \mathcal{F} \), we may take \( F = A \) and the inequality holds trivially. Assume now that \( A \notin \mathcal{F} \). Choose a feasible set \( F \in \mathcal{F} \) that minimizes \( |A \Delta F| \) (such an \( F \) exists because \( D \) is proper). Note that \( A \Delta F \neq \emptyset \). For each $f\in F\Delta A$, we have $$r_{\min}(D\Delta\{A\Delta f\})=r_{\min}\left((D\Delta A)\Delta f\right)=|\{F\Delta A\}\Delta f|=|F\Delta A|-1=r_{\min}(D\Delta A)-1$$
		and 
		$$r_{\max}(D\Delta\{A\Delta f\})\geq r_{\max}(D\Delta A)-1.$$
		Thus $\omega(D\Delta \{A\Delta f\})\geq \omega(D\Delta A)$.
		Repeating this process continuously for all the elements in $F\Delta A$ yields
		$$\omega(D\Delta F)=\omega(D\Delta A\Delta\{A\Delta F\}) \geq \omega(D\Delta A).$$
        Thus the conclusion holds.
        \end{proof}
		
	For the delta-matroid $D(G)$ of a ribbon graph $G$, each feasible set corresponds to a spanning quasi-tree, and we have $\gamma(G) = \omega(D(G))$. Specializing Theorem~\ref{feasible} to ribbon graphs thus yields the following corollary, which can also be obtained directly from the proof of the main theorem in \cite{Chen2025+}.
    
	\begin{corollary}[\cite{Chen2025+}]
		
		For any ribbon graph $G$, there exists a spanning quasi-tree $A$ of $G$ such that $$\gamma(G^A)=\partial\gamma_M(G).$$
	\end{corollary}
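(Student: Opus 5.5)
The corollary follows by translating Theorem~\ref{feasible} through the dictionary between ribbon graphs and their delta-matroids. We apply Theorem~\ref{feasible} to the set system $D(G)=(E(G),\mathcal{F}(G))$. It is proper, since every ribbon graph has a spanning quasi-tree: in each component one can add edges to a spanning tree, or simply take a spanning tree, which has one boundary component. So Theorem~\ref{feasible} gives a feasible set $F\in\mathcal{F}(G)$ with $\omega(D(G)\Delta F)=\partial\omega_M(D(G))$. By the definition of $\mathcal{F}(G)$, $F$ is the edge set of a spanning quasi-tree $A$ of $G$; we identify $A$ with its edge set, as the statement does.

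Next we identify the two maxima. For every $B\subseteq E(G)$, Theorem~\ref{trans} gives $D(G)\Delta B=D(G^B)$, and the result of Chun et al.\ applied to the ribbon graph $G^B$ gives $\gamma(G^B)=\omega(D(G^B))=\omega(D(G)\Delta B)$. Partial duality does not change the edge set, so $B$ ranges over the same subsets on both sides. Taking the maximum over all $B\subseteq E(G)$ therefore gives $\partial\gamma_M(G)=\partial\omega_M(D(G))$.

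Combining the two steps,
\[
\gamma(G^A)=\omega(D(G)\Delta A)=\partial\omega_M(D(G))=\partial\gamma_M(G).
\]

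The only point that needs any care is properness of $D(G)$, including for disconnected $G$. This holds componentwise, because a spanning tree of each component is a quasi-tree. With that in place, everything else is a direct substitution of Theorem~\ref{trans} and the identity $\gamma=\omega\circ D$.
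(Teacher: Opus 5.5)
Your proof is correct and follows the paper's argument: you identify $\partial\gamma_M(G)=\partial\omega_M(D(G))$ via Theorem~\ref{trans} and $\gamma(G)=\omega(D(G))$, then apply Theorem~\ref{feasible} to $D(G)$ and read the resulting feasible set as a spanning quasi-tree. You also check explicitly that $D(G)$ is proper, since spanning trees or forests are quasi-trees componentwise; the paper leaves this implicit, although the proof of Theorem~\ref{feasible} relies on it.
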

	\begin{proof}
    Let $D(G)$ be the delta-matroid of $G$. By Theorem~\ref{trans}, for any $X \subseteq E(G)$,
    \[
    \gamma(G^X) = \omega(D(G) \Delta X).
    \]
    Hence, $\partial\gamma_M(G) = \partial\omega_M(D(G))$. 
    By Theorem~\ref{feasible}, there exists a feasible set $A$ of $D(G)$ (corresponding to a spanning quasi-tree of $G$) such that 
    \[
    \omega(D(G) \Delta A) = \partial\omega_M(D(G)).
    \]
    Combining these equalities yields 
    $$\gamma(G^A)=\omega(D(G)\Delta A)= \partial\omega_M(D(G))=\partial\gamma_M(G).$$ 
	\end{proof}

    Consider a connected ribbon graph \( G \) and an edge subset \( A \subseteq E(G) \). 
A formula for the Euler genus of the partial dual \( G^A \), given in \cite{Gross2020}, is
\[
\gamma(G^A) = 2 + e(G) - f(A) - f(A^c).
\]
When \( A \) is the edge set of a spanning quasi-tree of \( G \), we have \( f(A) = 1 \), hence
\[
\gamma(G^A) = 1 + e(G) - f(A^c).
\]
Therefore, \( \gamma(G^A) \) attains its maximum if and only if \( f(A^c) \) attains its minimum.
    
	\begin{proposition}[\cite{Chen2025+}]\label{chen}
        Let $G$ be a connected ribbon graph, and let $\mathcal{T}$ denote the set of spanning quasi-trees of $G$. Then
    \[
    \partial\gamma_M(G) = 1 + e(G) - \min_{A \in \mathcal{T}} f(A^c).
    \]
	\end{proposition}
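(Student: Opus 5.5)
This proposition is a direct combination of Theorem~\ref{feasible} (in the form of the corollary just proved) with the partial-dual genus formula of \cite{Gross2020} quoted above. Since $G$ is connected, a spanning quasi-tree is a connected spanning ribbon subgraph with exactly one boundary component. Throughout, $f(A)$ denotes the number of boundary components of the spanning ribbon subgraph $(V(G),A)$, and $A^c=E(G)\setminus A$. The plan is to prove the two inequalities separately.

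\emph{Lower bound.} For every $A\in\mathcal{T}$ we have $f(A)=1$. The formula $\gamma(G^A)=2+e(G)-f(A)-f(A^c)$ therefore gives
\[
\gamma(G^A)=1+e(G)-f(A^c).
\]
By definition $\partial\gamma_M(G)\ge \gamma(G^A)$ for each such $A$. Taking the maximum over $A\in\mathcal{T}$ yields
\[
\partial\gamma_M(G)\ge 1+e(G)-\min_{A\in\mathcal{T}} f(A^c).
\]
The minimum exists because $\mathcal{T}$ is finite and nonempty: a connected ribbon graph always has a spanning quasi-tree, for instance any spanning tree.

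\emph{Upper bound.} By the corollary to Theorem~\ref{feasible}, there is a spanning quasi-tree $A_0\in\mathcal{T}$ with $\gamma(G^{A_0})=\partial\gamma_M(G)$. The feasible sets of $D(G)$ are exactly the edge sets of spanning quasi-trees, so the feasible set provided by Theorem~\ref{feasible} indeed lies in $\mathcal{T}$. Applying the same formula to $A_0$ gives
\[
\partial\gamma_M(G)=1+e(G)-f(A_0^c)\le 1+e(G)-\min_{A\in\mathcal{T}} f(A^c).
\]
Combining the two bounds proves the equality.

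There is no real obstacle here; all of the work is already contained in Theorem~\ref{feasible}. The only point needing care is consistency of conventions. The genus formula from \cite{Gross2020} is stated for connected $G$, which is why connectedness appears in the hypothesis. The quantity $f(\cdot)$ must be read as the boundary-component count of the spanning ribbon subgraph on all of $V(G)$, so that $f(A)=1$ holds exactly for spanning quasi-trees.
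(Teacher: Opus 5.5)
Your proof is correct and follows essentially the same route as the paper. The paper likewise specializes the Gross et al.\ formula $\gamma(G^A)=2+e(G)-f(A)-f(A^c)$ to spanning quasi-trees, where $f(A)=1$, and combines it with the corollary of Theorem~\ref{feasible} that $\partial\gamma_M(G)$ is attained at a spanning quasi-tree; your two-inequality write-up simply makes explicit the paper's step that $\gamma(G^A)$ attains its maximum if and only if $f(A^c)$ attains its minimum.
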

    
	The invariant $\min_{A\in \mathcal{T}}f(A^c)$ is called \emph{partial-duality deficiency} of the ribbon graph $G$, denoted by $\partial\xi(G)$.
	For a set system $D=(E,\mathcal{F})$, Theorem~\ref{feasible} yields
	\begin{align}\label{eqation1}
	    \partial\omega_M(D)=\max\{|F_1\Delta F_2|\mid F_1,F_2\in \mathcal{F}\}.
	\end{align}
    Combining this formula with Proposition~\ref{chen}, we obtain an expression for $\partial\xi(G)$ that depends only on the set of spanning quasi-trees of $G$.
    
	\begin{proposition}
		
		Let $G$ be a connected ribbon graph and $D(G)=(E(G),\mathcal{F}(G))$ be its delta-matroid. Then 
        \[
    \partial\xi(G)=e(G)+1-\max\{|F_1\Delta F_2|\mid F_1,F_2\in \mathcal{F}(G)\}.
    \]
	\end{proposition}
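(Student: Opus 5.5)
The plan is to combine Proposition~\ref{chen} with the identity \eqref{eqation1}, using the correspondence between partial duals and twists. Since $\partial\xi(G)=\min_{A\in\mathcal{T}} f(A^c)$, Proposition~\ref{chen} can be rearranged as
\[
\partial\xi(G)=1+e(G)-\partial\gamma_M(G).
\]
So it suffices to show that $\partial\gamma_M(G)=\max\{|F_1\Delta F_2| \mid F_1,F_2\in\mathcal{F}(G)\}$.

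For this, we first note, as in the proof of the corollary following Theorem~\ref{feasible}, that Theorem~\ref{trans} together with $\gamma(H)=\omega(D(H))$ gives $\gamma(G^X)=\omega(D(G)\Delta X)$ for every $X\subseteq E(G)$. Hence $\partial\gamma_M(G)=\partial\omega_M(D(G))$.

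Next we justify \eqref{eqation1} for the proper set system $D(G)$. It is proper because a connected ribbon graph has a spanning quasi-tree; for instance, the edge set of any spanning tree gives a quasi-tree. For feasible $F$, the empty set is feasible in $D\Delta F$, so $r_{\min}(D\Delta F)=0$. Also $r_{\max}(D\Delta F)=\max_{F_2\in\mathcal{F}}|F\Delta F_2|$. Theorem~\ref{feasible} lets us restrict the maximum defining $\partial\omega_M$ to twists by feasible sets $F_1$, which yields
\[
\partial\omega_M(D(G))=\max_{F_1,F_2}|F_1\Delta F_2|.
\]
Substituting this into the rearranged Proposition~\ref{chen} gives the claimed formula. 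No step is a real obstacle; the only care needed is checking that $r_{\min}(D\Delta F)=0$ when $F$ is feasible.
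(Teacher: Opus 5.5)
Your proof is correct and takes the same route as the paper: rearrange Proposition~\ref{chen} as $\partial\xi(G)=1+e(G)-\partial\gamma_M(G)$, use Theorem~\ref{trans} to get $\partial\gamma_M(G)=\partial\omega_M(D(G))$, and substitute equation~\eqref{eqation1}, which comes from Theorem~\ref{feasible}. You also spell out details the paper leaves implicit, namely that $D(G)$ is proper and that $r_{\min}(D\Delta F)=0$ for feasible $F$.
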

	
	\section{Monotonicity with Respect to Maximum Twist Width}\label{4}

    In this section, we prove the existence of a sequence of elements in a delta-matroid such that the twist width increases monotonically to its maximum. We first present a useful lemma.
    
	\begin{lemma}[\cite{Bonin}]\label{contain}
    
		For any delta-matroid $D=(E,\mathcal{F})$ and any $F_0\in \mathcal{F}$, there exist $F_1\in \mathcal{F}_{\min}(D)$ and $F_2\in \mathcal{F}_{\max}(D)$ such that $F_1\subseteq F_0\subseteq F_2$.
	\end{lemma}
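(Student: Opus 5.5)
We need to prove Lemma~\ref{contain}: for any $F_0\in\mathcal{F}$ there are $F_1\in\mathcal{F}_{\min}(D)$ and $F_2\in\mathcal{F}_{\max}(D)$ with $F_1\subseteq F_0\subseteq F_2$. By symmetry it suffices to prove the existence of $F_2$. The lower statement follows by applying the upper one to the twist $D\Delta E$, which is again a delta-matroid since the symmetric exchange axiom is invariant under twisting. In $D\Delta E$, complementation reverses both inclusion and cardinality. So a maximum feasible set of $D\Delta E$ containing $E\setminus F_0$ gives, after complementing, a minimum feasible set of $D$ contained in $F_0$.

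For the upper statement I will argue by extremal choice. Among all feasible sets $F\supseteq F_0$, which is a nonempty family since it contains $F_0$, choose one of maximum cardinality. Call it $F$, and suppose $|F|<r_{\max}(D)$. Choose $Y\in\mathcal{F}_{\max}(D)$ so that $|Y\Delta F|$ is as small as possible. Since $|Y|>|F|$, some $u\in Y\setminus F$ exists. Apply the symmetric exchange axiom to $F$, $Y$ and $u\in F\Delta Y$. This gives $v\in F\Delta Y$ with $F'=F\Delta\{u,v\}\in\mathcal{F}$. There are three cases.
\begin{enumerate}
\item If $v=u$, then $F'=F\cup\{u\}$ is feasible, contains $F_0$, and is larger than $F$. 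This contradicts the choice of $F$.
\item If $v\in Y\setminus F$ with $v\neq u$, then $F'=F\cup\{u,v\}$ is again a larger feasible superset of $F_0$, a contradiction.
\item If $v\in F\setminus Y$, then $F'=F\cup\{u\}\setminus\{v\}$ has the same size as $F$. It may fail to contain $F_0$, so this case needs more work.
\end{enumerate}

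The main obstacle is the third case, and I would handle it by exchanging from the side of $Y$ instead. Apply the axiom to $Y$, $F$ and an element $w\in F\setminus Y$, which exists because $F\not\subseteq Y$ in this case. This gives $z\in Y\Delta F$ with $Y\Delta\{w,z\}$ feasible. If $z\in F\setminus Y$, then $Y\cup\{w,z\}$ is feasible and larger than $Y$, contradicting maximality. If $z=w$, then $Y\cup\{w\}$ is larger than $Y$, again a contradiction. So $z\in Y\setminus F$, and $Y'=Y\cup\{w\}\setminus\{z\}$ is a maximum feasible set with $|Y'\Delta F|=|Y\Delta F|-2$. This contradicts the choice of $Y$. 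Hence $F\subseteq Y$ after all, so $F\setminus Y=\emptyset$ and the third case cannot occur. Therefore the first exchange always produces a larger feasible superset of $F_0$, which contradicts the maximality of $F$. So $|F|=r_{\max}(D)$ and we take $F_2=F$.

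Actually the argument can be streamlined: choose $Y\in\mathcal{F}_{\max}$ minimizing $|Y\setminus F_0|$ (equivalently $|Y\Delta F_0|$ among maximum sets). The second exchange shows $F_0\subseteq Y$ directly, because if some $w\in F_0\setminus Y$ existed, the resulting $z$ would have to lie in $Y\setminus F_0$, and swapping would decrease the distance. This gives $F_2=Y$ directly, and I would present this cleaner version.
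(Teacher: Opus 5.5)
Your proof is correct. The paper gives no proof of this lemma and simply cites it from \cite{Bonin}, so your argument is a self-contained replacement for that citation rather than an alternative to an argument in the text.

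The streamlined version is the one to present. Take $Y\in\mathcal{F}_{\max}(D)$ with $|Y\setminus F_0|$ minimal, and suppose some $w\in F_0\setminus Y$ exists. Exchanging from $Y$ toward $F_0$ at $w$ gives an element $z$, and there are three possibilities:
\begin{enumerate}
\item $z=w$ produces the feasible set $Y\cup\{w\}$, which is larger than $r_{\max}(D)$;
\item $z\in F_0\setminus Y$ with $z\neq w$ produces the feasible set $Y\cup\{w,z\}$, again too large;
\item $z\in Y\setminus F_0$ produces a maximum feasible set $Y\Delta\{w,z\}$ with one fewer element outside $F_0$, contradicting the choice of $Y$.
\end{enumerate}
Hence $F_0\subseteq Y$.

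The lower half via $D\Delta E$ is also sound. Twisting preserves the exchange axiom, and complementation swaps $\mathcal{F}_{\min}$ with $\mathcal{F}_{\max}$ while reversing inclusion. Alternatively, the mirror-image exchange argument with $X\in\mathcal{F}_{\min}(D)$ minimizing $|X\setminus F_0|$ works verbatim.

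Your first version, with $F$ a largest feasible superset of $F_0$, is correct but redundant. Its second exchange step is already the whole streamlined argument.

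What your route adds over the citation is transparency. The exchange axiom enters the proof of Theorem~\ref{key} only through this lemma, together with invariance of delta-matroids under twisting; Theorem~\ref{feasible} is stated for general set systems. Consistently, the lemma fails for the set system in the Remark after Theorem~\ref{key}: there $\{3,4,5\}$ is feasible but not contained in the unique maximum feasible set $\{1,2,3,4\}$.
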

	\begin{theorem}\label{key}
		
		For any delta-matroid $D=(E,\mathcal{F})$, there exists a sequence of elements $e_1, e_2,\dots, e_k$ of $E$ such that
      		\begin{enumerate}
			\item  $\{ e_1, e_2,\dots, e_k\}\in \mathcal{F}$;
			\item   $\omega(D\Delta \{e_1, e_2,\dots, e_k\})=\partial\omega_M(D)$;
			\item  the sequence  of twist widths
		$$\omega(D), \omega(D\Delta e_1), \dots, \omega(D\Delta \{e_1, e_2,\dots, e_k\})$$
       rises monotonically (i.e., never decreasing) to \( \partial\omega_M(D) \).
        
		\end{enumerate}
	\end{theorem}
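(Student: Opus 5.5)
The plan is to reduce everything to the formula $\partial\omega_M(D)=\max\{|F_1\Delta F_2| : F_1,F_2\in\mathcal{F}\}$ from (\ref{eqation1}) and then to build the sequence greedily inside a suitable feasible set. First I will record two elementary facts. For any $A\subseteq E$,
\[
r_{\min}(D\Delta A)=\min_{X\in\mathcal{F}}|X\Delta A|,\qquad r_{\max}(D\Delta A)=\max_{X\in\mathcal{F}}|X\Delta A|.
\]
Moreover, if $F\in\mathcal{F}$, then $r_{\min}(D\Delta F)=0$, so $\omega(D\Delta F)=\max_{X}|X\Delta F|$.

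Next I fix feasible sets $F_1,F_2$ with $|F_1\Delta F_2|=\partial\omega_M(D)$. Among all maximizing pairs, I choose $F_1$ extremal, for instance of minimum cardinality; Lemma~\ref{contain} suggests one can even push $F_1$ toward $\mathcal{F}_{\min}(D)$ without losing maximality. I will twist by the elements of $F_1$ in a carefully chosen order. This gives condition~1 at the end, since $\{e_1,\dots,e_k\}=F_1$. It also gives condition~2, because $\omega(D\Delta F_1)=\max_X|X\Delta F_1|\ge|F_2\Delta F_1|=\partial\omega_M(D)$, and the reverse inequality holds by definition.

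The key local observation is the following. Passing from $A$ to $A\cup\{e\}$ with $e\notin A$ changes every $|X\Delta A|$ by exactly $\pm1$: it changes by $-1$ if $e\in X$ and by $+1$ if $e\notin X$. Hence $r_{\min}$ and $r_{\max}$ each move by exactly one, and $\omega$ changes by $-2$, $0$ or $+2$. The width drops only in one situation: every minimizer $X$ of $|X\Delta A|$ avoids $e$ (so $r_{\min}$ goes up), and every maximizer $Y$ contains $e$ (so $r_{\max}$ goes down). So the proof reduces to a claim. For every $A\subsetneq F_1$ reached along the way, there is some $e\in F_1\setminus A$ such that either some minimizer of $|X\Delta A|$ contains $e$, or some maximizer avoids $e$. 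Choosing such an $e$ at each step gives a non-decreasing sequence, and it ends at $F_1$ as required.

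The main obstacle is proving this claim. I would argue by contradiction. Suppose that, for all $e\in F_1\setminus A$, every minimizer $X$ satisfies $X\cap(F_1\setminus A)=\emptyset$ and every maximizer $Y$ satisfies $F_1\setminus A\subseteq Y$. I would apply the Symmetric Exchange Axiom to a minimizer $X$ and $F_1$, using an element $u\in F_1\setminus A\subseteq X\Delta F_1$. This gives a feasible $X\Delta\{u,v\}$, and the aim is to show it is strictly closer to $A$, contradicting the minimality of $X$. Dually, I would exchange a maximizer $Y$ with $F_1$ (or with $F_2$) to produce either a feasible set farther from $A$ or a pair with larger symmetric difference, or a same-size maximizing pair with smaller first member, contradicting the extremal choice of $(F_1,F_2)$. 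If that direct exchange argument gets stuck, my fallback is to first twist by a minimum feasible set $F_{\min}\subseteq F_1$ supplied by Lemma~\ref{contain}, and only afterwards by $F_1\setminus F_{\min}$. In the first phase $r_{\min}$ should decrease at every step, since $F_{\min}$ itself keeps $|F_{\min}\Delta A|$ decreasing while the minimum feasible size controls competitors. In the second phase the target $F_2$ can be shown to keep $r_{\max}$ increasing.
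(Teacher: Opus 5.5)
Your set-up matches the paper's. Choosing $F_1$ of minimum cardinality among maximizing pairs is the same as choosing $F$ of minimum cardinality in $\hat{\mathcal F}$. Your criterion is also the paper's: twisting by $e\in F_1\setminus A$ cannot lower the width if some minimizer of $|X\Delta A|$ contains $e$ or some maximizer avoids $e$, and this is exactly what Cases~1 and~2.1 supply. (A small correction: $r_{\min}$ and $r_{\max}$ move by \emph{at most} one, not exactly one, unless $D$ is even. For $\mathcal F=\{\emptyset,\{1\}\}$, twisting by $1$ changes neither. Your criterion only needs ``at most'', so this is harmless.)

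The gap is that the claim you isolate is the whole content of the theorem, and neither route you offer proves it.
\begin{itemize}
\item The exchange between a minimizer $X$ and $F_1$ can never give a set strictly closer to $A$. Adding $u\notin X\cup A$ costs one, and $v$ recovers at most one.
\item When $A\in\mathcal F$, the unique minimizer is $A$ itself, and it avoids all of $F_1\setminus A$. So in that case the minimizer side is genuinely dead, and everything rests on the maximizer side. There you give only a list of hoped-for contradictions.
\item The fallback fails in its second phase. We have $r_{\min}=0$ at both ends of that phase, so $r_{\max}$ rising at every step would force $\partial\omega_M(D)=\omega(D\Delta F_{\min})+|F_1\setminus F_{\min}|$. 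In the eight-element example of Section~\ref{5}, $\emptyset$ is the unique minimum feasible set and $\omega(D)=6$. The minimum-size members of $\hat{\mathcal F}$ have four elements, and $\partial\omega_M=8$. So $r_{\max}$ cannot rise at all four steps, and the two mechanisms must be interleaved.
\end{itemize}

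The missing idea is to apply Lemma~\ref{contain} to the twisted delta-matroid $D\Delta A$ at every stage, not only to $D$; twisting preserves the exchange axiom, so $D\Delta A$ is a delta-matroid. Since $F_1\Delta A=F_1\setminus A$ is feasible in $D\Delta A$, there is a minimum feasible set $Z\subseteq F_1\setminus A$ of $D\Delta A$.
\begin{itemize}
\item If $Z\neq\emptyset$, then $Z\cup A$ is a minimizer containing any $e\in Z$.
\item If $Z=\emptyset$, then $A\in\mathcal F$ and $|A|<|F_1|$, so minimality of $|F_1|$ gives $\omega(D\Delta A)<\partial\omega_M(D)$. Apply Lemma~\ref{contain} in $D\Delta A$ to $F_2\Delta A$. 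This yields a maximizer $Y$ with $F_2\Delta A\subseteq Y\Delta A$. If $Y\supseteq F_1\setminus A$, then $|F_1\Delta F_2|=|(F_1\setminus A)\Delta(F_2\Delta A)|\le |Y\Delta A|=\omega(D\Delta A)<\partial\omega_M(D)$, a contradiction. So $Y$ avoids some $e\in F_1\setminus A$.
\end{itemize}
This is precisely the paper's Case~1/Case~2 argument. Your framing around a fixed $F_1$ would even spare you the paper's inductive check that minimality transfers from $D$ to $D\Delta f$.
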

	\begin{proof}
    Let $\hat{\mathcal{F}}$ be the set of feasible sets of $D$ such that $\omega(D\Delta A)=\partial\omega_M(D)$ holds for every $A\in \hat{\mathcal{F}}$. By Theorem~\ref{feasible}, $\hat{\mathcal{F}}$ is nonempty.
Let $F$ be a feasible set of minimum cardinality in $\hat{\mathcal{F}}$. By this choice, $F$ satisfies Properties~1 and 2; thus it remains to verify Property~3. We proceed by induction on $|F|$.

    When $|F|=0$, it is trivial that the sequence rises monotonically to $\partial \omega_M(D)$. 
	We assume the conclusion holds for $|F|=k$, and then show it holds for $|F|=k+1$.
		By Lemma \ref{contain}, there exist $F_1\in \mathcal{F}_{\min}(D)$ and $F_2\in \mathcal{F}_{\max}(D)$ such that $$F_1\subseteq F\subseteq F_2.$$ 
		{\bf Case 1.} If $F_1 \neq \emptyset$, let $f \in F_1\subseteq F$. Then
		$$r_{\min}(D\Delta f)=|F_1\Delta f|=r_{\min}(D)-1$$
        and 
		$$r_{\max}(D\Delta f)\geq r_{\max}(D)-1.$$
		Thus $\omega(D\Delta f)\geq \omega(D)$.
        
        Let $D'=D\Delta f$ and 
		$\hat{\mathcal{F'}}$ be the set of feasible sets of $D'$ such that $\omega(D'\Delta A)=\partial\omega_M(D')$ holds for every $A\in \hat{\mathcal{F'}}$. By the definition of $\partial\omega_M(D')$, we have $\partial\omega_M(D')=\partial\omega_M(D)$. 
         We will prove that $F\Delta f$ is a feasible set of minimum
cardinality in $\hat{\mathcal{F}}'$. Since $\omega(D\Delta F) = \partial\omega_M(D)$, by equation (\ref{eqation1}), there exists a feasible set $X$ of $D$ such that $|F\Delta X| = \partial\omega_M(D)$. Then $$|(F\Delta f) \Delta (X\Delta f)| = \partial\omega_M(D) = \partial\omega_M(D'),$$ where $X\Delta f$ is a feasible set of $D'$. Hence, $F\Delta f \in \hat{\mathcal{F}}'$. It remains to show that $F\Delta f$ has minimum cardinality in $\hat{\mathcal{F}}'$.
      Suppose, for the sake of contradiction, that there exists some $T\in \hat{\mathcal{F}}'$ such that $|T| < |F\Delta f|$. Then $T\Delta f\in \hat{\mathcal{F}}$ and $|T\Delta f| < |F|$, which contradicts the fact that $F$ is a feasible set of minimum
cardinality in $\hat{\mathcal{F}}$.
      
      Since $|F\Delta f| = k$, the conclusion follows by the induction hypothesis.\\
		{\bf Case 2.} Assume $F_1=\emptyset$.
		
		{\bf Case 2.1.} For some $F_3\in \mathcal{F}_{\max}(D)$, $F\nsubseteq F_3$. This means that there exists an $f\in F$ and $f\notin F_3$. 
		Then 
		$$r_{\max}(D\Delta f)=|F_3\Delta f|=r_{\max}(D)+1$$
        and 
		$$r_{\min}(D\Delta f)\leq r_{\min}(D)+1.$$
		Thus $\omega(D\Delta f)\geq \omega(D)$. Let $D'=D\Delta f$ and 
		$\hat{\mathcal{F'}}$ be the set of feasible sets of $D'$ such that $\omega(D'\Delta F)=\partial\omega_M(D')$ holds for every $F\in \hat{\mathcal{F'}}$. It is confirmed that $F\Delta f$ is a feasible set of minimum
cardinality in $\hat{\mathcal{F}}'$, and the proof is analogous to that in Case 1. Since $|F\Delta f|=k$, the conclusion thus holds by the induction hypothesis.
		
		{\bf Case 2.2.} For any $A\in \mathcal{F}_{\max}(D)$, $F\subseteq A$.
		Since $\omega(D\Delta F)=\partial\omega_M(D)$,  by equation (\ref{eqation1}), there exists a feasible set $X$ such that $|F\Delta X|=\partial\omega_M(D).$ 
		
		By Lemma \ref{contain}, there is an $F_3\in \mathcal{F}_{\max}(D)$ such that $X\subseteq F_3$. 
		Since $\emptyset=F_1\in \mathcal{F}$, $\omega(D)=r_{\max}-0=|F_3|$, and since $F\neq \emptyset$, we know $\emptyset \notin \hat{\mathcal{F}}$. Then
        $$|F\Delta X|= \partial\omega_M(D)>\omega(D)=|F_3|.$$
		Since $F_3\in\mathcal{F}_{\max}(D)$, $F\subseteq F_3$. Thus, we have $F\cup X \subseteq F_3$, and further
        $$|F_3|\geq |F\cup X|\geq |F\Delta X|>|F_3|,$$ which is impossible. The conclusion holds.
	\end{proof}
    \begin{remark}	
		\normalfont
        The above theorem does not hold for general set systems. For example, let $D = ([5], \{\emptyset, \{1,2\}, \{3,4,5\}, \{1,2,3,4\}\})$. Clearly, $\omega(D)=4$. It is straightforward to verify that only $D\Delta \{1,2\}$ and $D\Delta \{3,4,5\}$ have the maximum twist width of $D$, i.e., $\partial\omega_M(D)=\omega(D\Delta \{1,2\})=\omega(D\Delta \{3,4,5\})=5$. Thus, there is no sequence satisfying Property~(3) of Theorem~\ref{key} that rises monotonically to $\partial\omega_M(D)$.
\end{remark}

    For the delta-matroid $D(G)$ of a ribbon graph $G$, the above theorem immediately implies the following result, which provides an affirmative answer to the Problem \ref{oques} proposed in \cite{Chen2025+}.
	\begin{corollary}
		
		For any ribbon graph $G$, there exists an edge set $A$ (corresponding to a spanning quasi-tree) with a sequence of its edges $e_1,e_2,\dots, e_k$ such that
		$$\gamma(G^ {\{e_1, e_2,\dots, e_k\}})=\gamma_M(G)$$ and the sequence of Euler genera
		$$\gamma(G), \gamma(G^{\{e_1\}}), \dots, \gamma(G^ {\{e_1, e_2,\dots, e_k\}})$$
		rises monotonically (i.e., never decreasing) to $\gamma_M(G)$.
	\end{corollary}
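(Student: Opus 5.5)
The corollary is the ribbon-graph specialization of Theorem~\ref{key}, so the plan is simply to pass to the delta-matroid $D(G)$ and translate back. Here $\gamma_M(G)$ is read as $\partial\gamma_M(G)$.

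First I would record that $D(G)=(E(G),\mathcal{F}(G))$ is a delta-matroid. This is part of the standard setup from \cite{Chun2019} recalled in Section~\ref{2}. In particular $\mathcal{F}(G)\neq\emptyset$, since every ribbon graph has a spanning quasi-tree (for example, the edge set of a spanning forest), so Theorem~\ref{key} applies to $D(G)$. Applying it gives a sequence $e_1,\dots,e_k$ with three properties:
\begin{itemize}
\item $A=\{e_1,\dots,e_k\}\in\mathcal{F}(G)$, so $A$ is the edge set of a spanning quasi-tree;
\item $\omega(D(G)\Delta A)=\partial\omega_M(D(G))$;
\item the widths $\omega(D(G)\Delta\{e_1,\dots,e_i\})$, for $i=0,\dots,k$, are non-decreasing.
\end{itemize}

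Next I would translate each width back to an Euler genus. For every $X\subseteq E(G)$, Theorem~\ref{trans} gives $D(G)\Delta X=D(G^X)$. The result of \cite{Chun2019}, $\gamma(H)=\omega(D(H))$, applied to $H=G^X$, then gives $\gamma(G^X)=\omega(D(G)\Delta X)$. Taking $X=\{e_1,\dots,e_i\}$ for each $i$ shows that the sequence of Euler genera coincides term by term with the sequence of widths, so it is monotonically non-decreasing. Maximizing over $X$ gives $\partial\gamma_M(G)=\partial\omega_M(D(G))$, exactly as in the proof of the earlier corollary. Hence the final term $\gamma(G^A)$ equals $\partial\gamma_M(G)$.

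There is no real obstacle. The only points needing care are confirming that $D(G)$ satisfies the symmetric exchange axiom and is proper, which is cited from \cite{Chun2019} rather than reproved, and keeping the notation $\gamma_M$ consistent with $\partial\gamma_M$.
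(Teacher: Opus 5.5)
Your proposal is correct and follows the paper's route. The paper says Theorem~\ref{key} "immediately implies" the corollary when applied to $D(G)$, translating widths to Euler genera via Theorem~\ref{trans} and $\gamma(H)=\omega(D(H))$, with $\partial\gamma_M(G)=\partial\omega_M(D(G))$ as in the earlier corollary; you additionally spell out details it leaves implicit, namely that $D(G)$ is a proper delta-matroid and that $\gamma_M$ should be read as $\partial\gamma_M$.
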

\section{Algorithm and Examples}\label{5}

For a given delta-matroid \( D = (E, \mathcal{F}) \), let \( \hat{\mathcal{F}} \) be the set of feasible sets \( F \) of \( D \) satisfying \( \omega(D \Delta F) = \partial\omega_M(D) \).  Indeed, Theorem \ref{key} provides an algorithm for constructing a sequence that satisfies the monotonic property in Theorem \ref{key}, see Algorithm \ref{al}.
    
    \begin{algorithm}[htbp]
	\caption{Monotonic Sequence Construction for Delta-Matroids}
	\label{alg:delta_matroid_seq_only_S}
	\begin{algorithmic}[1]\label{al}
		\REQUIRE 
		\STATE The delta-matroid $D = (E, \mathcal{F})$;
		\STATE Initial feasible set $F_{\text{init}} \in   \hat{\mathcal{F}}$ such that $|F_{\text{init}}|\leq |T|$ for any $T\in \hat{\mathcal{F}}$;

		\ENSURE 
		\STATE $S = [s_1, s_2, \dots, s_n]$ (where $n = |F_{\text{init}}|$) containing all elements removed from $F_{\text{init}}$ in order.
		
		\STATE \textbf{Initialization}
		\STATE  $S := []$; $F_{\text{curr}} := F_{\text{init}}$; $\mathcal{F}_{\text{curr}} := \mathcal{F}$.

		\STATE \textbf{Iterative Reduction (Core Logic)}
		\WHILE{$F_{\text{curr}}\neq \emptyset$}
		\IF{$\emptyset \notin \mathcal{F}_{\text{curr}}$}
		\STATE  $\emptyset \notin \mathcal{F}_{\text{curr}}$
		\STATE \quad Choose $X \in \mathcal{F}_{\min}(D)$ such that $X \subseteq F_{\text{curr}}$ (exists by Thm. \ref{key}, Case 1);
		\STATE \quad Choose $x \in X$ arbitrarily;
		\STATE \quad $S := S + [x]$;
        \STATE \quad $F_{\text{curr}} := F_{\text{curr}} \setminus \{x\}$;
        \STATE \quad $\mathcal{F}_{\text{curr}} := \{T \Delta \{x\} \mid T \in \mathcal{F}_{\text{curr}}\}$.
        \ELSE
	     \STATE  $\emptyset \in \mathcal{F}_{\text{curr}}$
        \STATE \quad Choose $X \in \mathcal{F}_{\max}(D)$ such that $F_{\text{curr}} \nsubseteq X$ (exists by Thm. \ref{key}, Case 2.1);
        \STATE \quad Choose $x \in F_{\text{curr}} \setminus X$;
        \STATE \quad $S := S + [x]$;
        \STATE \quad $F_{\text{curr}} := F_{\text{curr}} \setminus \{x\}$;
        \STATE \quad $\mathcal{F}_{\text{curr}} := \{T \Delta \{x\} \mid T \in \mathcal{F}_{\text{curr}}\}$.
		\ENDIF
		\ENDWHILE
		
		\STATE \textbf{Finalization}
		\STATE Return the sequence $S$; 
	\end{algorithmic}
\end{algorithm} 
  
   We give a small example of a non-binary delta-matroid due to Bouchet \cite{Bouchet1991}, and construct its monotonic sequence via the proposed algorithm.  
\begin{example}
\normalfont
    Let $D=([4],\mathcal{F})$ where $$\mathcal{F}=\{\emptyset, \{1,2\},\{1,3\},\{1,4\},\{2,3\},\{2,4\},\{3,4\}\}.$$ It is easy to see that $ \hat{\mathcal{F}} =\{\{1,2\},\{1,3\},\{1,4\},\{2,3\},\{2,4\},\{3,4\}\}$. 
    By selecting $F=\{1,2\}$, we start the algorithm.\\
    \begin{enumerate}
        \item \textbf{First iteration}: $\emptyset\in \mathcal{F}$, we select $X=\{2,3\} \in \mathcal{F}_{\max}$;\\
         \quad Choose $1\in F\setminus X$;\\
          \quad Set $S = [1]$, \\
$\qquad F' = F \setminus \{1\} = \{2\}$, \\
$\qquad \mathcal{F}' = \{\{1\}, \{2\},\{3\},\{4\},\{1,2,3\},\{1,2,4\},\{1,3,4\}\}$.\\
          \item \textbf{Second iteration}:
           $\emptyset\notin \mathcal{F}'$, we select $X=\{2\}\in \mathcal{F}'_{\min}$;\\
    \quad Choose $2\in X\subseteq F'$;\\
    \quad Set $S=[1,2]$,\\
    $F'=F\setminus\{2\}=\emptyset$, \\
    $\mathcal{F}'=\{\emptyset, \{1,2\},\{1,3\},\{1,4\},\{2,3\},\{2,4\},\{1,2,3,4\}\}$;\\
    \item \textbf{Termination}: Since $F'=\emptyset$, return $S=[1,2]$. 
    \end{enumerate} 
\end{example}

For a ribbon graph, we can derive its corresponding delta-matroid and then apply the algorithm to obtain a monotonic partial dual Euler genus sequence for the ribbon graph.
\begin{figure}[h]
    \centering
    \includegraphics[width=0.7\linewidth]{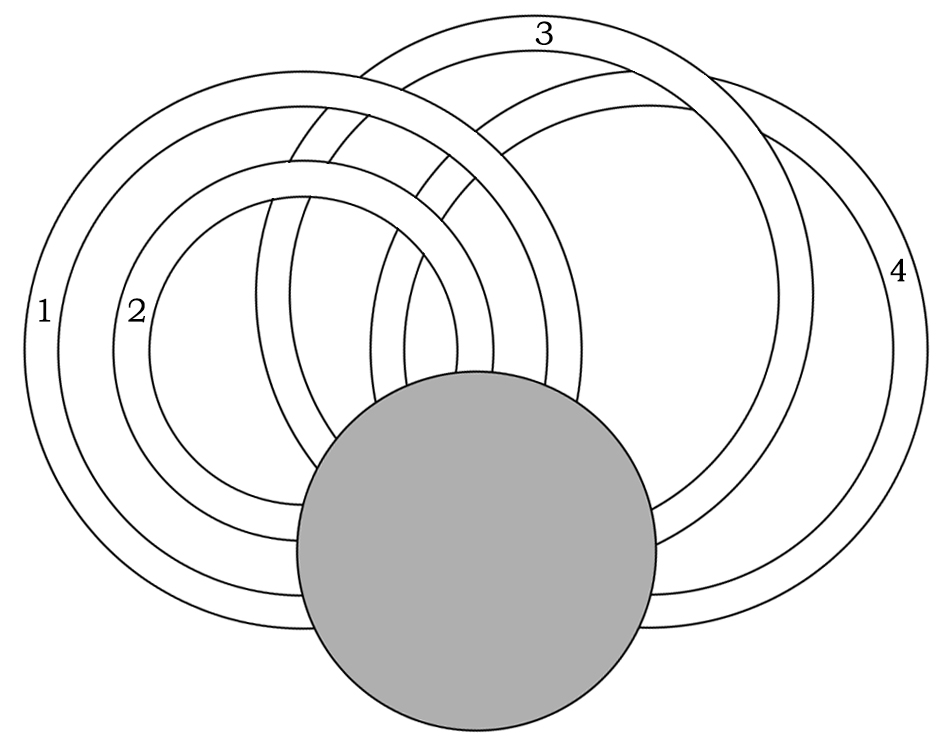}
    \caption{A ribbon graph with one vertex and four edges}
    \label{Fig01}
\end{figure}
\begin{example}
\normalfont
    Let $G$ be a ribbon graph as shown in Figure \ref{Fig01}. Then the delta-matroid of $G$ is $D(G)=([4],\mathcal{F})$, where $\mathcal{F}=\{\emptyset, \{1,3\},\{1,4\},\{2,3\},\{2,4\},\{3,4\}\}$. Clearly, $ \hat{\mathcal{F}} =\{\{1,3\},\{1,4\},\{2,3\},\{2,4\}\}$. 
    By selecting $F=\{1,3\}$, we start the algorithm.\\
    \begin{enumerate}
        \item \textbf{First iteration}: $\emptyset\in \mathcal{F}$, we select $X=\{2,3\} \in \mathcal{F}_{\max}$;\\
         \quad Choose $1\in F\setminus X$;\\
          \quad Set $S=[1]$, $F'=F\setminus\{1\}=\{3\}$, $\mathcal{F}'=\{\{1\},\{3\},\{4\},\{1,2,3\},\{1,2,4\},\{1,3,4\}\}$.\\
          \item \textbf{Second iteration}:
           $\emptyset\notin \mathcal{F}'$, we select $X=\{3\}\in \mathcal{F}'_{\min}$;\\
    \quad Choose $3\in X\subseteq F'$;\\
    \quad Set $S=[1,3]$, $F'=F\setminus\{3\}=\emptyset$, $\mathcal{F}'=\{\emptyset, \{1,2\},\{1,3\},\{1,4\},\{3,4\},\{1,2,3,4\}\}$.\\
    \item \textbf{Termination}: Since $F'=\emptyset$, return $S=[1,3]$. 
    \end{enumerate} 
\end{example}
We observe from the above examples that twisting arbitrary sequence of elements from a feasible set in $\hat{\mathcal{F}}$ produces a width sequence that rises monotonically to the maximum twist width. It is natural to ask whether, for all delta-matroids, twisting arbitrary sequence of elements from a minimum-cardinality feasible set in $\hat{\mathcal{F}}$ can produce a desired width sequence. However, the answer is negative, and we present an example below to interpret this.
\begin{figure}[h]
    \centering
    \includegraphics[width=0.7\linewidth]{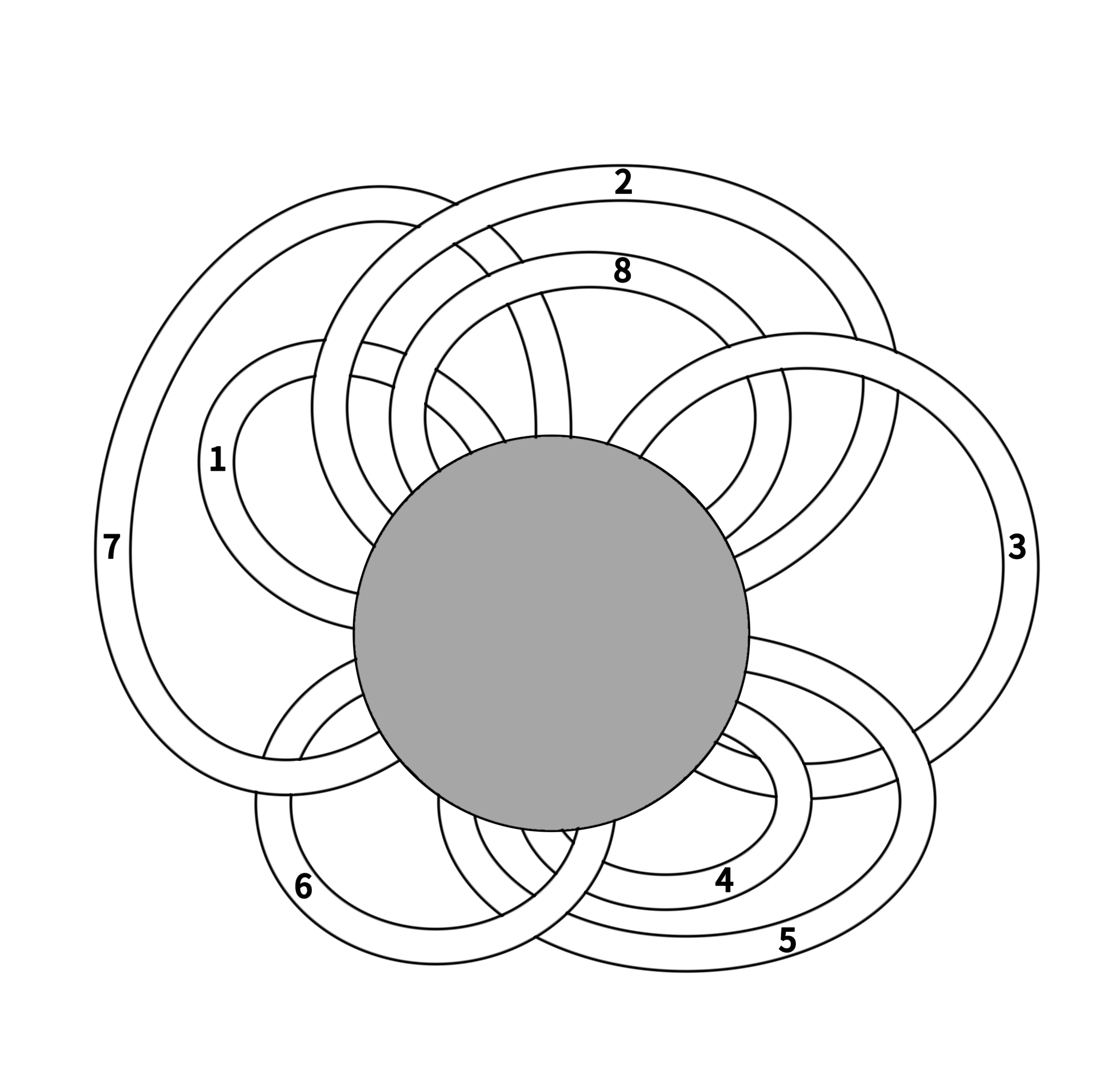}
    \caption{A ribbon graph with one vertex and eight edges}
    \label{Fig02}
\end{figure}
\begin{example}
\normalfont
Let $G$ be a ribbon graph as shown in Figure \ref{Fig02}. Then the delta-matroid of $G$ is $D(G) = ([8], \mathcal{F})$. Let $\mathcal{F}_k$ denote the set of feasible sets of cardinality $k$. It can be routinely checked that 
\begin{align*}
\mathcal{F}_2 &= \{\{1,2\},\{1,8\},\{2,3\},\{2,7\},\{3,4\},\{3,5\},\{3,8\},\{4,6\},\{5,6\},\{6,7\},\{7,8\}\}, \\
\mathcal{F}_6 &= \{\{1,2,3,4,6,7\},\{1,2,3,5,6,7\},\{1,3,4,5,7,8\},\{1,3,5,6,7,8\}\}, 
\end{align*}
and $\{1,2,3,4\},\{5,6,7,8\} \in \mathcal{F}_4$.

For any $F \in \mathcal{F}$, $\omega(D(G) \Delta F) = 8$ if and only if $[8] \setminus F \in \mathcal{F}$. It is easy to verify that for any $F \in \mathcal{F}_2$, $[8]\setminus F \notin \mathcal{F}$ and $\omega(D(G) \Delta F) < 8$. Note that $\omega(D(G) \Delta \{1,2,3,4\}) = 8$ (the maximum twist width of $D(G)$), and thus, $\{1,2,3,4\}$ is a minimum-cardinality feasible set for which $\omega(D(G) \Delta \{1,2,3,4\})$ attains the maximum twist width. However, $\omega(D(G) \Delta \{1\}) = 4 < 6 = \omega(D(G))$. We will see that the width sequence
$$\omega(D),\omega(D(G) \Delta \{1\}),\omega(D(G) \Delta \{1,2\}),\omega(D(G) \Delta \{1,2,3\}),\omega(D(G) \Delta \{1,2,3,4\})$$
does not rise monotonically to $\partial \omega_M(D(G))$.
\end{example}

	\section*{Acknowledgements}
	This work is supported by NSFC (Nos. 12571379, 12471326).

\end{document}